\theoremstyle{plain}
\newtheorem{theorem}{Theorem}
\newtheorem{lemma}[theorem]{Lemma}
\theoremstyle{definition}
\theoremstyle{remark}
\newtheorem{remark}[theorem]{Remark}
\def\N{\mathbb{N}}
\def\Z{\mathbb{Z}}
\def\F{\mathbb{F}}
\author{Luis H. Gallardo \\
Univ. Brest, \\
UMR CNRS 6205, \\
Laboratoire de Math\'ematiques de Bretagne Atlantique,\\
6, Av. Le Gorgeu, C.S. 93837, Cedex 3, F-29238 Brest, \\
France \\
{\tt Luis.Gallardo@univ-brest.fr}\\
AMS 2010: Primary 11T55, 11T06\\
keywords: Cyclotomic polynomials, characteristic $2$,\\
Mersenne polynomials, Fixed points of $\sigma$, Factorization.\\}
\title{Fixed points of the sum of divisors function on $\F_2[x]$}
\begin{document}

\maketitle

\abstract{We work an analogue of a classical arithmetic problem over polynomials. More precisely,
we study the fixed points $F$ of the sum of divisors function $\sigma : \F_2[x] \mapsto \F_2[x]$
(defined \emph{mutatis mutandi} like the usual sum of divisors over the integers)
 of the form $F := A^2 \cdot S$, $S$ square-free, with $\omega(S) \leq 3$, coprime with $A$, for $A$ even, of whatever degree, under some conditions. This gives a characterization of $5$ of the $11$ known fixed points of $\sigma$ in $\F_2[x]$.}

\maketitle

\section{Introduction}

We have all hear somewhere in our career that there are few positive integers $n$ with the property that the sum of all positive divisors of $n$ is a multiple of $n$. Let write the sum as 
$\sigma(n)$.  Our claim becomes then the following.  There are few solutions $n$ of the following equation.
\begin{equation}
\frac{\sigma(n)}{n} \in \N
\label{multiPZ}
\end{equation}
For example, when $n \in \{6,120\}$ we have $\frac{\sigma(6)}{6} = 2$ and $\frac{\sigma(120)}{120} = 3$. In fact this happens since we have $divisors(6) = \{1,2,3,6\}$ so that $\sigma(6) = 1+2+3+6 = 12$, and 
$$divisors(120) = \{1,2,3,4,5,6,8,10,12,15,20,24,30,40,60,120\}$$ so that $\sigma(120)=1+2+3+4+5+6+8+10+12+15+20+24+30+40+60+120 = 360$. Already here we see that we can compute $\sigma(120)$ more efficiently as follows:
Since $120 = 2^3 \cdot 3 \cdot 5$ and $\sigma(x \cdot y) = \sigma(x) \cdot \sigma(y)$ provided that $x,y$ has no common factors, we can compute:
$$
\sigma(120) = \sigma(8)\cdot\sigma(3)\cdot\sigma(5) = (1+2+4+8)\cdot(1+3)\cdot(1+5)=360.
$$
In a nutshell, in the present paper we study some arithmetic properties of an analogue to the function $n \mapsto \sigma(n)$, in which we replace $n$ by a polynomial $A(x)$ with coefficients
$0$ and $1$ only, and compute with $0,1$ as usual, besides the rule $1+1=0$ that replaces the usual rule $1+1=2$. The field $\F_2 =\{0,1\}$ in which we compute the coefficients of $A(x)$ is the simplest of all finite fields. 

For readers less familiar with finite fields, we recommend to look first at section
\ref{ReviewerA} for a simple computation with binary polynomials. Then, to look at subsections \ref{choose1}, and \ref{choose2} below. And, finally, come back to look at the rest of this Introduction.

 For all readers, we added some information about our choice of the finite field $\F_2$ for the coefficients of our polynomials
(see subsections \ref{choose1}, and \ref{choose2}) at the end of this Introduction. We also added a few comments about the role played by some small degree irreducible binary polynomials as prime factors of our perfect polynomials. This comes from an observation of one of the referees.

The paper being a little technical, we hope the following considerations will be helpful for the 
reader.

We now introduce some definitions and notation to explain the original arithmetic problem over the integers that motivated the study of our variant over the binary polynomials in $\F_2[x]$, and the link between them as well.

Let $A \in \F_2[x]$ be an irreducible polynomial, then we say that $A$ is \emph{prime}. A polynomial $M \in\F_2[x]$ is \emph{Mersenne}
(an analogue of a Mersenne number: $2^n-1$) if $M+1$ is a product of powers of $x$ and powers of $x+1$. We say that $M+1$ \emph{splits}. When a Mersenne polynomial $M$ is irreducible, we say that $M$ is a \emph{Mersenne prime}. Given a binary polynomial $B$, a binary polynomial $A$ in the sub-ring $\F_2[B]$ of $\F_2[x]$ is \emph{complete in $B$} \cite{Canaday}, if all coefficients of $A$ are equal to $1$; when $B=x$, we say simply that $A$ is \emph{complete}. A binary polynomial $B$ is \emph{odd} if $B(0)=B(1)=1$, otherwise $B$ is \emph{even}. More standard notation follows. We let $\omega(P)$ denote the number of pairwise distinct prime factors of $P \in \F_q[x]$. Likewise, we let $v_{P}(A)$ denote the valuation of the prime $P$ in the binary polynomial $A$, i.e., the least positive integer $m$, such that $P^m \mid A$ 
but $P^{m+1}\nmid A$, we also write this as $P^m \vert\vert A$. Finally, we let $\overline{\F_2}$ denote a fixed algebraic closure of $\F_2$.

We recall that a binary \emph{perfect} polynomial $A$ (see \cite{Canaday, Po, Gall-Rahav2007, Gall-Rahav2009, Gall-Rahav2012, Gall-Rahav2016, Gall-Rahav2019, Gall-RahavX, Gall-Rahav2020}) is defined by the equality
$\sigma(A)=A$, where $\sigma(A) = \sum_{D \mid A} D \in \F_2[x]$ is the sum of all divisors of $A$, including $1$ and $A$. For coprime binary polynomials $X,Y$ one has, as over the integers $\Z$, $\sigma(X Y) =\sigma(X)\sigma(Y)$. The $\sigma$ function, that maps polynomials into polynomials, is more complex than the usual sum of divisors function $\sigma_1 \colon \F_2[x] \mapsto \N$ given by $\sigma_1(A) = \sum_{D \mid A} 2^{\deg(A)}$.
For instance, some divisors $D$ of $A$ can sum up to $0$, while always a sum over $D$ of 
$2^{\deg{D}}$ is $>0$. 

It is easy to check that $0$ and $1$ are perfect polynomials, and that for any non-negative integer $n$, the polynomial $T(n) =(x(x+1))^{2^n-1}$ is (\emph{trivial}) perfect. There are only $11$ non-trivial (known) binary perfect polynomials (\emph{sporadic}), and all of them are even (see list in Lemma \ref{lesT}). Some recent computations \cite{Po}, show that new sporadic perfects must have degree exceeding $200$.

Coming back to the integers, we observe that the binary perfect polynomials are a polynomial analogue of the multiperfect numbers over $\Z$. A multiperfect number is a positive integer $n$ such that
\begin{equation}
\label{multip}
\sigma(n)/n \in \Z.
\end{equation}
Of course, we know very few about these numbers.
One see, by easy degree considerations, that for $A \in \F_2[x]$, 
\begin{equation}
\label{perfF2}
\sigma(A)/A \in \F_2[x]
\end{equation}
 is equivalent to $A = \sigma(A)$. Thus, this explains our interest in the \emph{fixed} points of $\sigma$ on $\F_2[x]$.

Technically, observe that the following problem has attracted some interest (see \cite{Agou1, Agou2, Ahmadi, Brochero, Buttler, CohenST, CohenST1, KyuMels, KyuKyu, Long, Panario, Petersson, LReis}). Given an irreducible polynomial $f$ over a finite field $\F_q$, given a polynomial $g(x)$ over the same field. How to describe the \emph{prime} (irreducible) factors of $f(g(x))$.
  
We contribute (in a special case) to this problem in the present paper, since our study of the fixed points of $\sigma$ implies that
some relations exist between the prime factors $P$ of the square-free polynomial $S$ in Lemma \ref{les6} and
the prime factors $\Phi_2(P) = 1+P$ of $\sigma(S)$. Namely, we have
\begin{equation}
\label{Panar0}
A = \sigma(A),
\end{equation}
in which we take $A$ of a special form:
\begin{equation}
\label{Panar1}
A = B^2 \cdot S = B^2 \cdot \prod_{j=1}^r P_j = \sigma(A) =\sigma(B^2) \cdot \prod_{j=1}^r (1+P_j).
\end{equation}
Therefore, equation \eqref{Panar1} gives some information about the prime factors of  $\Phi_2(P) = 1+P$ when $P$ is an odd prime divisor of $S$. See \cite{Gallardo2022} for related results obtained using the cyclotomic polynomial $\Phi_3(P) = 1+P+P^2$.

More generally, solving equation \eqref{Panar0} is a non-trivial problem of polynomial factorization in $\F_2[x]$.  See Lidl, Niederreiter \cite{Rudolf}, and Swan \cite{Swan} for known results about this problem.\\ 

The contribution of the present paper consists of giving a simple generalization of some properties of five of these $11$ known sporadic perfect polynomials.  These polynomials share a special property not shared by the other six sporadic perfect polynomials. More precisely, (see Lemma \ref{les6}), we
characterize these $5$ sporadic perfect $A$ from some special properties of their factorization
$A = B^2 \cdot S$, with $B$ even, and $S$ square-free, coprime with $B$.

Observe that we do \emph{not} fix a bound on $\omega(B)$
(so that potentially we consider many possible new even perfects (if any exists) $A$ of degree $\geq 200$ (see again \cite{Po})),
nor on the degrees of prime factors $P$ of $S$. Moreover, $P$ is not necessarily Mersenne (as was considered, e.g., in \cite{Gall-Rahav2012, Gall-Rahav2016, Gall-Rahav2019}).  Thus, we are discarding in Theorem \ref{threeC} much more  \emph{non}-perfect polynomials than in previous work (without a single computer computation).

Throughout the paper, the $1941$ work of Canaday \cite{Canaday} (see Lemma \ref{canad} and Remark \ref{core}), is important.\\

Our main result is as follows:

\begin{theorem}
\label{threeC}
Let $B \in \F_2[x]$ be an even polynomial. Assume that $\gcd(B^2,\sigma(B^2))=1$.  Let $A := B^2 P_1 \cdots P_r$, with $r \geq 1$ pairwise distinct odd prime $P_j$ such that $P_j \nmid B$.  Assume that $r \leq 3$. Then $A$ perfect implies that
\begin{equation}
\label{nwperf}
A \in \{M_{5a},M_{5b},M_{16},M_{20a},M_{20b}\},
\end{equation}
where 
\begin{equation*}
M_{5a} := x(x+1)^2 (x^2+x+1), M_{5b} := M_{5a}(x+1),
\end{equation*}
\begin{equation*}
M_{16} := x^4 (x+1)^4 (x^4+x^3+1)(x^4+x^3+x^2+x+1),
\end{equation*}
and
\begin{equation*}
M_{20a} := x^4 (x+1)^6 (x^3+x+1)(x^3+x^2+1)(x^4+x^3+x^2+x+1), 
\end{equation*}
\begin{equation*}
M_{20b} := M_{20a}(x+1). 
\end{equation*}
\end{theorem}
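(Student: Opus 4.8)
The plan is to translate the perfectness equation $\sigma(A)=A$ for $A = B^2\,P_1\cdots P_r$ into a divisibility constraint on the primes $P_j$ and their "cyclotomic shifts" $1+P_j$, and then run a finite case analysis on $r\in\{1,2,3\}$. First I would use multiplicativity: since $B^2$ is coprime to $S := P_1\cdots P_r$ and the $P_j$ are pairwise distinct, $\sigma(A) = \sigma(B^2)\prod_{j=1}^r(1+P_j)$, so perfectness is equivalent to
\begin{equation}
\label{eq:plan-main}
B^2\,\prod_{j=1}^r P_j \;=\; \sigma(B^2)\,\prod_{j=1}^r (1+P_j).
\end{equation}
The hypothesis $\gcd(B^2,\sigma(B^2))=1$ is the crucial lever: it forces $B^2 \mid \prod_j(1+P_j)$ and $\sigma(B^2)\mid\prod_j P_j$. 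Since $B$ is even, $x(x+1)\mid B$, so $x^2(x+1)^2\mid\prod_j(1+P_j)$; each $1+P_j$ is \emph{even} (as $P_j$ is odd, $P_j(0)=P_j(1)=1$, hence $(1+P_j)(0)=(1+P_j)(1)=0$), so $x(x+1)\mid 1+P_j$ for every $j$, and comparing valuations of $x$ and $x+1$ on both sides of \eqref{eq:plan-main} will pin down $v_x(B)$, $v_{x+1}(B)$ in terms of the $v_x(1+P_j)$, $v_{x+1}(1+P_j)$. Conversely $\sigma(B^2)$ being odd (a classical fact: $\sigma$ of an even square is odd here, or one argues directly) combined with $\sigma(B^2)\mid P_1\cdots P_r$ bounds $\omega(\sigma(B^2))\le r\le 3$ and constrains the odd prime factors of $\sigma(B^2)$ to lie among $\{P_1,\dots,P_r\}$.

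Next I would invoke Canaday's structural results (Lemma~\ref{canad} and Remark~\ref{core} in the excerpt) together with Lemma~\ref{les6}: Canaday's 1941 analysis tells us, for an even perfect polynomial, exactly which small-degree primes must occur and how the "$1+P$" factorizations interlock — in particular it controls the Mersenne-type behaviour of $1+P_j$, i.e. when $1+P_j$ splits into powers of $x$ and $x+1$ versus when it introduces a genuinely new odd prime. The key combinatorial observation is a \emph{closure} phenomenon: every odd prime $Q$ dividing some $1+P_j$ must itself be one of the $P_i$ (since $Q\mid\sigma(A)=A=B^2\prod P_i$, $Q$ is odd, and $Q\nmid B$ as $\gcd(B,\sigma(B^2))$-considerations plus $Q\mid\sigma(B^2\,\text{-part})$ rule it out — more carefully, $Q\mid\prod(1+P_j)\mid B^2$ is impossible since $Q$ odd and $B^2\prod P_i$ has its odd part exactly $\prod P_i$). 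So the map $P_j\mapsto(\text{odd prime factors of }1+P_j)$ is an endofunction on the $r$-element set $\{P_1,\dots,P_r\}$ together with possible "escapes to the trivial primes $x,x+1$". With $r\le 3$ this is a genuinely finite problem: one enumerates the possible functional-graph shapes (fixed points, $2$-cycles, $3$-cycles, and trees feeding into them), and for each shape the degree/valuation bookkeeping from \eqref{eq:plan-main} plus Canaday's constraints on $\deg P_j$ leaves only finitely many candidate primes, which one checks by hand against the known list in Lemma~\ref{lesT}.

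I expect the main obstacle to be the case $r=3$ with a "long chain" configuration — say $P_1 \mid$ (something), $1+P_1$ has odd part $P_2$, $1+P_2$ has odd part $P_3$, and $1+P_3$ splits — because here one cannot immediately bound the degrees of the $P_j$ from a single cyclotomic relation; one needs to combine the global degree identity $2\deg B + \sum\deg P_j = \deg\sigma(B^2) + \sum\deg(1+P_j) = \deg B^2 \cdot(\text{something})$... rather, $\deg\sigma(B^2)=\deg(B^2)=2\deg B$ forces $\sum\deg P_j=\sum\deg(1+P_j)$, i.e. $\sum\deg P_j=\sum\deg(1+P_j)$, which is automatic, so the real leverage must come from the valuation identities at $x$ and $x+1$ and from Canaday's lemma forbidding certain exponents (e.g. that $1+P$ for $P$ odd prime is rarely a pure power of $x(x+1)$, and when it is, $P$ is one of a short explicit list). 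Handling the interplay — ensuring the forced exponents $v_x(B^2)$, $v_{x+1}(B^2)$ are consistent with $\gcd(B^2,\sigma(B^2))=1$, which in turn restricts $v_x(B)$ via the formula $\sigma(x^{2k})=1+x+\cdots+x^{2k}$ being coprime to $x$ (automatic) but its \emph{other} factors being coprime to $x(x+1)$ (a real constraint, essentially that $2k+1$ has the right form) — is where the argument will need the most care. Once the chains and cycles are exhausted, matching the survivors to $M_{5a}, M_{5b}, M_{16}, M_{20a}, M_{20b}$ is a direct verification.
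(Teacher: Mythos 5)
There is a genuine gap, and it sits at the heart of your plan. Your ``closure'' claim --- that every odd prime $Q$ dividing some $1+P_j$ must itself be one of the $P_i$ --- is false, because it rests on a misreading of ``even'': in this paper $B$ even only means $B(0)B(1)=0$, i.e.\ $x\mid B$ or $(x+1)\mid B$; it does \emph{not} mean that $B$ splits as a product of powers of $x$ and $x+1$. An even $B$ may perfectly well have odd irreducible factors (e.g.\ $B=x(x+1)(x^2+x+1)$), so the odd part of $B^2\prod P_i$ is not ``exactly $\prod P_i$'', and an odd prime $Q$ dividing $\prod_j(1+P_j)$ can divide $B$ rather than being one of the $P_i$. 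Consequently the endofunction/functional-graph enumeration you build on this closure would silently discard exactly the configurations that are hardest to eliminate. Indeed, the bulk of the paper's proof of the $r=3$ case is devoted to the subcase $\omega(B)=3$, where $B=x^{a_1}(x+1)^{a_2}R_3^{a_3}$ with $R_3$ an \emph{odd} prime dividing $B$ and dividing $(P_1+1)(P_2+1)(P_3+1)=B^2$; ruling this out requires the explicit factorization $\sigma(x^{2a})+1=x(x+1)^{2^n-1}\bigl(1+x+\cdots+x^{k-1}\bigr)^{2^n}$ (Lemma \ref{identity2th}), a parity comparison of $R_3$-valuations (reaching contradictions of the form $2a_3=2^{n}+1$), and Canaday's result that $\sigma(x^{2k})=\sigma(P^{2n})$ is impossible (Lemma \ref{canad}(e)). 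None of this is reachable from your framework as stated.

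A secondary point: where your plan and the paper do overlap --- deriving $\sigma(B^2)\mid P_1\cdots P_r$ and $B^2\mid\prod_j(1+P_j)$ from the coprimality hypothesis, noting $\sigma(B^2)$ is odd, and bounding $\omega(B)\le\omega(\sigma(B^2))\le r$ --- your reasoning is sound and matches the paper's opening moves. But the paper then organizes the case analysis by $\omega(\sigma(B^2))\in\{1,2,3\}$ and $\omega(B)$, reducing the low cases to the known classification of perfect polynomials with $\omega(A)\le 4$ (Lemmas \ref{les6} and \ref{lesT}) and attacking the rest with the complete-polynomial machinery of Lemma \ref{canad}(b)--(f); your proposal gestures at ``Canaday's structural results'' without identifying which ones do the work or how the forced shapes $1+P_j=x^{c}(x+1)^{d}R_3^{e}$ are confronted with $1+P_j=x(x+1)^{2^n-1}K^{2^n}$. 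To repair the argument you would need to drop the closure claim, admit odd prime divisors of $B$ as a third possible source of odd factors of $\prod_j(1+P_j)$, and supply the valuation and completeness arguments that eliminate them.
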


\begin{remark}
\label{gcdB}
For all five perfect polynomials considered in the theorem, one has the following two conditions.
\begin{equation}
\label{condRC0}
B\; \text{is even},
\end{equation}
and 
\begin{equation}
\label{condRC}
\gcd(B^2,\sigma(B^2)) = 1.
\end{equation}
\end{remark}
Moreover, observe the following.
\begin{remark}
\label{B2}
An even polynomial square $B^2$ cannot be perfect \cite[Theorem 14]{Canaday} so that $B^2 \neq \sigma(B^2)$. This also follows from Lemma \ref{canad}(a), since  $\sigma(B^2)$ is odd. In Theorem \ref{threeC} we need the stronger condition \eqref{condRC} on $B$.
\end{remark}

Furthermore, consider the following two remarks.
\begin{remark}
\label{B2RC}
By computations, it seems that for each degree $d$ there are many polynomials $B$ of degree $d$ that satisfy conditions \eqref{condRC0}, and \eqref{condRC}. More precisely, a quick computation of all even polynomials $B$ up to degree $21$ shows that
more than $68$ percent of them do satisfy \eqref{condRC}. Thus, our result applies to many polynomials $A$, as in the statement of the theorem.
Therefore, our result cover many new cases, in which we do not know if the polynomial $A$ of the theorem is perfect or not, without checking with the computer all the possible primes $P_j$ that could divide $A$.
Unfortunately, we do not see how to use our result, or our proof of the result, to obtain new even perfect polynomials (if they exist) by computations.

As one of the referees, we believe that conditions \eqref{condRC0}, and  \eqref{condRC} are so strong that it should imply, regardless of the value of $r$, the following. If $B$ satisfies the conditions, then $A = B^2 P_1 \cdots P_r$ should be one of the $5$ sporadic polynomials in the conclusion of Theorem \ref{threeC}. This, if true, seems to be a non-trivial fact. We were just able to prove it under the conditions of our theorem.
\end{remark}

\begin{remark}
\label{core}
For being able to get some progress on the remaining cases not considered in the theorem (i.e., the cases in which $r>3$) it should be necessary to generalize the results
of Canaday in Lemma \ref{canad}. This alone is a non-trivial task. Moreover, even if this task could be done, we will not be able to deduce anything about a characterization of the six other known sporadic perfects. The reason is that these $6$ polynomials are \emph{not} of the form $B^2 P_1 \cdots P_r$ (see Lemma \ref{les6}). Moreover, the $6$ remaining known sporadic perfects do \emph{not} seem to share some other interesting common property.
In other words, the more general problem to characterize \emph{all} $11$ sporadic perfects is highly non-trivial. After several years of work, we have (with Rahavandrainy) \cite{Gall-Rahav2009A, Gall-Rahav2012, Gall-Rahav2016, Gall-Rahav2019,Gall-RahavX}, merely obtained a	characterization of all $11$ sporadic perfect in a \emph{very particular} case. Namely, in the case in which every odd prime divisor $P_j$ of an even perfect polynomial $A$, is of the special form
$$ P_j = x^{a_j}(x+1)^{b_j}+1$$ for some coprime exponents $a_j,b_j$ (i.e., each $P_j$ is a Mersenne polynomial). Of course, prime divisors of $A$ need \emph{not} be Mersenne polynomials.

Theorem \ref{threeC} is a first (modest) step to study the new case in which we assume that the prime divisors $P_j$ of an even perfect polynomial $A$ are \emph{not} necessarily Mersenne  polynomials.\\

Now, let us come back to the case $r>3$ of our approach. We know that this approach works to characterize the $5$ known sporadic perfects of the form $B^2 P_1 \cdots P_r$. But fails to characterize all known sporadic perfects.

However, me may add the following.
Essentially, (in the proof of the theorem) we use properties of the prime factors of general (not necessarily prime) Mersenne  polynomials $M$, i.e., polynomials with the property that $M+1$ has all its roots in $\F_2$. Now consider binary polynomials $M_g$, with the property that all roots of $M_g+1$
belong to an appropriate non-trivial extension field of $\F_2$ (e.g., belong to $\F_4$). We believe that understanding the factorization of these \emph{general Mersenne} polynomials $M_g$ can help to get some progress in the case when $r >3$.  However, even a simple preliminary study of this special case, appears to be a difficult non-trivial problem.
\end{remark}
Finally, we discuss the following two matters suggested by a referee.

\subsection{Choice of $\F_2$ as ground field for the coefficients of our polynomials}
\label{choose1}
The first reason for the choice is that the ring $\F_2[x]$ is considered as the closest analogue to the ring of integers $\Z$  to work arithmetic problems.

 The second (and more important) reason  for the choice is  the following.
We have no analogue of Canaday's results \cite{Canaday} over $\F_2[x]$ for other rings $\F_p[x]$, for $p$ an odd prime, nor for more general rings $\F_q[x]$ with $q$ a power of a prime. One reason for this is that the general problem of factorization into irreducible polynomials is much more complex  when the characteristic of the ring is $>2$.
This happens, regardless of the existence of many papers on the subject (see \cite{Beard77Fq,Beard77FqA,Beard78Fq,Beard91Fp,Beard97Fq,Gall-Rahav2005F4,Gall-Rahav2007F4,Gall-Rahav2008F3,Gall-Rahav2009Fp2,Gall-Rahav2009F4,Gall-Rahav2010Fp2,Gall-Rahav2011,Gall-Rahav2011Fpp,Gall-Rahav2012Fp,Gall-Rahav2014,Gall-Rahav2016Fp}).

\subsection{Role of small degree prime factors of even perfect polynomials in the present paper}
\label{choose2}

First, observe that the irreducible polynomials of degree $5$ or more of $\F_2[x]$ do not play any role in the paper. For which reason? The simple reason is that the only known perfect polynomials over $\F_2$ are all even and have irreducible factors of degrees $1,2,3,4$ only (see Lemma \ref{lesT}). Of course, it may exist unknown binary perfect polynomials $A$ with irreducible factors of any degree, but none such $A$ is known with degree $\leq 200$ (see \cite{Po}). Moreover, $\omega(A) \geq 5$ (see \cite{Gall-Rahav2009,Gall-Rahav2009A}). Furthermore, the main results used in the proof, namely the results in Lemma \ref{canad}, have the following property.
They reduce the study of irreducible factors of an even perfect polynomial of any degree to the study of small degree irreducible factors that all have degree less than $5$.

Even perfect polynomials, by definition, should have at least one linear factor. Indeed, they are divisible by both linear factors $x$ and $x+1$. In particular, if they are divisible only by $2$ irreducible factors they must be a product of a power of $x$ by a power of $x+1$. It is easy to prove that in fact the exponents must be equal, and of the form $2^n-1$. Thus, these polynomials coincide with the trivial perfects $T(n)$ (see also Section \ref{ReviewerA}).

The linear factors $x,x+1$ appear everywhere in the proof of the theorem. The reason is the following. For each odd irreducible factor $P$ that divides exactly a binary even perfect $A$ (i.e., such that $P$ divides $A$ but $P^2$ do not divide $A$) we have that $\sigma(P)=P+1$ divides also $\sigma(A)=A$. Thus, by definition of odd polynomial it is easy to see that $P+1$ is even, so that $x(x+1)$ divides $P+1$.

\section{A simple computation with binary polynomials}
\label{ReviewerA}

We will work with polynomials over the  smallest finite field. Namely, $\F_2 =\{0,1\}$.
First, let us observe that since the list of all divisors of $x$ is $[x,1]$, one has $\sigma(x) =x+1$.
By translation $x \mapsto x+1$, we deduce that $\sigma(x+1) =(x+1)+1 = x+(1+1) = x+0 = x$.
Now, the property, $\sigma(AB) = \sigma(A)\sigma(B)$,  provided that $A,B$ are coprime, implies that
\begin{equation}
\label{perf1}
\sigma(x(x+1))= \sigma(x) \sigma(x+1) =(x+1)x = x(x+1).
\end{equation}
We have then found the perfect polynomial with the smallest degree $>0$, namely $T(1)=x(x+1)$.
We can write $T(1)$ as follows: $T(1) = x^{2^1-1}(x+1)^{2^1-1}$.
Following the same lines of computation, one proves easily by induction that if 
$T(n) = x^{2^n-1}(x+1)^{2^n-1}$ is perfect, then the same holds for $T(n+1)$. 

Thus, we have infinitely many even perfect polynomials (that we call \emph{trivial} perfect).
Unfortunately, we cannot obtain more perfect polynomials with similar methods.
The list of all known perfect polynomials (see Lemma \ref{lesT}) was obtained by computer computations.
We believe that this list cover \emph{all} perfect polynomials. However, we are very far to build  a proof (or a disproof) of this.
The present paper explores a small part of this problem, using elementary methods, like the preceding computation.
We have no choice, there is \emph{no} (known) more sophisticated methods to treat this problem.

\section{Tools}
\label{toool}

The following lemma contains a simple (new) observation in part (a), and summarizes some useful results of Canaday \cite{Canaday} in parts (b) to (f).

\begin{lemma}
\label{canad}
\begin{itemize}
\item[\rm{(a)}]
Let $P$ be prime, and let $n$ be a positive integer. Then $\sigma(P^{2n})$ is odd.
In particular, $\sigma(C^2)$ is odd, for any binary polynomial $C$.
\item[\rm{(b)}]
If $A = x^{h-1}+x^{h-2}+\cdots+1$ is a complete polynomial and $(x+1)^r$ divides $A$ but $(x+1)^{r+1}$ does not, then $r=2^n-1$ and $A = (x+1)^{2^n-1}B^{2^n}$ where $B$ is complete.
\item[\rm{(c)}]
The only complete and irreducible polynomials of the form $x(x+1)^{\beta}+1$ are $x^2+x+1$ and $x^4+x^3+x^2+x+1$.
\item[\rm{(d)}]
The only complete $A =x^{2m}+\cdots+1$ whose irreducible factors are of the form $x^{\alpha}(x+1)^{\beta}+1$ are $x^2+x+1,x^4+x^3+x^2+x+1, (x^3+x+1)(x^3+x^2+1)$.
\item[\rm{(e)}]
It is impossible to have $\sigma(x^{2k}) = \sigma(P^2)$ or, more generally, $\sigma(Q^{2m})=\sigma(P^{2n})$ for irreducible polynomials $P,Q \in \F_2[x]$.
\item[\rm{(f)}]
The polynomial $P=x(x+1)^{2^m-1}+1$ is irreducible only for $m=1$ and $m=2$.
\end{itemize}
\end{lemma}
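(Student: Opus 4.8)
Since part (a) is the only genuinely new assertion, I would concentrate the proof there and invoke Canaday \cite{Canaday} for parts (b)--(e). The plan for (a) is a direct parity computation. Writing $\sigma(P^{2n}) = 1 + P + P^2 + \cdots + P^{2n}$, a sum of exactly $2n+1$ terms, I recall that a nonzero binary polynomial $B$ is odd precisely when $B(0) = B(1) = 1$, equivalently when neither $x$ nor $x+1$ divides $B$. So it suffices to evaluate $\sigma(P^{2n})$ at the two points $0$ and $1$. Fix $c \in \{0,1\}$. If $P(c) = 0$, then $P^i(c) = 0$ for every $i \geq 1$ and only the constant term survives, giving $\sigma(P^{2n})(c) = 1$. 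If instead $P(c) = 1$, then $P^i(c) = 1$ for all $i$, and the sum reduces to $2n+1$ copies of $1$; since $2n+1$ is odd, this again equals $1$ in $\F_2$. In either case $\sigma(P^{2n})(c) = 1$, so $\sigma(P^{2n})$ is odd.

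For the ``in particular'' clause, I would factor the nonzero polynomial $C$ as $C = \prod_j P_j^{e_j}$ into distinct primes, so that $C^2 = \prod_j P_j^{2e_j}$ and, by multiplicativity of $\sigma$ on coprime factors, $\sigma(C^2) = \prod_j \sigma(P_j^{2e_j})$. Each factor is odd by the first part (taking $n = e_j \geq 1$), and oddness is preserved under products because evaluating a product at $0$ or at $1$ multiplies the corresponding values, all equal to $1$. Hence $\sigma(C^2)$ is odd, and the degenerate case $C = 1$ is immediate from $\sigma(1) = 1$.

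Parts (b)--(e) are quoted from Canaday's $1941$ memoir, so I would simply cite the corresponding results in \cite{Canaday} rather than reprove them; their proofs rest on the factorization theory of $x^h + 1$ over $\F_2$ (via the identity $x^{2^s m} + 1 = (x^m+1)^{2^s}$) together with delicate completeness constraints that are outside the scope of the present note. Part (f), however, I would derive as a short corollary of (c). The key observation is that $(x+1)^{2^m-1} = (x^{2^m}+1)/(x+1) = x^{2^m-1} + x^{2^m-2} + \cdots + 1$ is complete, whence $x(x+1)^{2^m-1} + 1 = 1 + x + x^2 + \cdots + x^{2^m}$ is complete for every $m$. Thus $x(x+1)^{2^m-1}+1$ is a complete polynomial of the form $x(x+1)^{\beta}+1$ with $\beta = 2^m - 1$, and by (c) it is irreducible exactly when $\beta \in \{1,3\}$, i.e. when $m \in \{1,2\}$, which yields $x^2+x+1$ and $x^4+x^3+x^2+x+1$ respectively.

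The main obstacle is not in (a), which is elementary once one notices that the term count $2n+1$ is odd; the genuine depth of the lemma lies in the cited parts (b)--(e), and in particular in Canaday's classification of complete polynomials with prescribed factor shapes. Since I am taking those on the authority of \cite{Canaday}, the only care needed is bookkeeping: confirming that the ``in particular'' statement covers $C = 1$, that all exponents $e_j$ in the factorization are positive so that part (a) applies, and that the reduction of (f) to (c) correctly identifies $\beta = 2^m - 1$ and verifies completeness of the resulting polynomial.
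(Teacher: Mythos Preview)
Your proof of (a) is correct and essentially identical to the paper's: both evaluate $\sigma(P^{2n})$ at $0$ and $1$ and use that the sum has $2n+1$ terms; your case split on the value $P(c)\in\{0,1\}$ is in fact slightly cleaner than the paper's split on whether $\deg(P)>1$ or $P\in\{x,x+1\}$. For the ``in particular'' clause and for parts (b)--(e), you and the paper proceed the same way (multiplicativity of $\sigma$, then citations to Canaday).

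The one genuine deviation is part (f). The paper simply cites \cite[Lemma~2]{Canaday}, whereas you derive (f) from (c) by checking that $(x+1)^{2^m-1}=1+x+\cdots+x^{2^m-1}$, so that $x(x+1)^{2^m-1}+1=1+x+\cdots+x^{2^m}$ is complete, and then invoking (c) to force $\beta=2^m-1\in\{1,3\}$. This reduction is correct and buys you a more self-contained argument at the cost of a two-line computation; the paper's route is shorter on the page but relies on one more external reference.
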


\begin{proof}
We prove (a). One sees that $S :=\sigma(P^{2n})$ is a sum of $2n+1$ nonzero monomials $P^k$.
If $\deg(P) >1$, we have $P(0)=P(1)=1$ since $P$ is prime, thus $S(0)=S(1)=1$. If $P=x$ then
$S(0)=P(0)=1$, $S(1)=2n+1=1$ in $\F_2$. Similarly, if $P=x+1$ then $S(1)=P(1)=1$, and
$S(0)=2n+1=1$ in $\F_2$.  Put $C = \prod_{j} P_j^{n_j}$, for some primes $P_j$, thus $\sigma(C^2)= \prod_{j}\sigma(P_j^{2 n_j})$ is odd as product of odd polynomials.

Part (b) is \cite[Lemma 1]{Canaday}. Part (c) is \cite[Corollary]{Canaday}. Part (d) is \cite[Theorem 8]{Canaday}. Likewise, part (e) is \cite[Lemma 14]{Canaday}, and part (f) is  \cite[Lemma 2]{Canaday}.
\end{proof}

The list of all known \cite{Canaday} sporadic perfect follows. Gallardo and Rahavandrainy \cite{Gall-Rahav2009, Gall-Rahav2009A} proved
that the list contains all the sporadic perfects $M$ with $\omega(M) \leq 4$. The case $\omega(M) = 5$ is open from $2009$.

\begin{lemma}
\label{lesT}
With the primes
\begin{equation*} 
Q_2 := x^2+x+1,\; Q_{3a} := x^3+x+1,\; Q_{3b} := x^3+x^2+1,\;
Q_{4a} :=x^4+x^3+1,
\end{equation*}
\begin{equation*} 
 Q_{4b} := x^4+x^3+x^2+x+1, Q_{4c} := x^4+x+1;
\end{equation*} 
one has the $11$ sporadic perfects known. Besides,  $M_{20a}$ and $M_{20b}$, they are the unique sporadic perfects with at most four
distinct prime divisors.
\begin{equation*}
M_{5a} := x(x+1)^2 \cdot Q_2,
M_{5b} := (x+1) x^2 \cdot Q_2,
M_{11a} := x(x+1)^2 \cdot Q_2^2 \cdot Q_{4c},
\end{equation*} 
\begin{equation*}
M_{11b} := x^2 (x+1)\cdot Q_2^2 \cdot Q_{4c},
M_{11c} := x^3 (x+1)^4 \cdot Q_{4a},
M_{11d} := x^4 (x+1)^3 \cdot Q_{4b},
\end{equation*} 
\begin{equation*}
M_{15a} := x^3 (x+1)^6 \cdot Q_{3a} \cdot Q_{3b},
M_{15b} := x^6 (x+1)^3 \cdot Q_{3a} \cdot Q_{3b},
M_{16} := x^4 (x+1)^4 \cdot Q_{4a} \cdot Q_{4b},
\end{equation*} 
\begin{equation*}
M_{20a} := x^4 (x+1)^6 \cdot Q_{3a}\cdot Q_{3b} \cdot Q_{4b},
M_{20b} := x^6 (x+1)^4 \cdot Q_{3a} \cdot Q_{3b} \cdot Q_{4a}.
\end{equation*}
\end{lemma}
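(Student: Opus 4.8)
The plan is to split the assertion into two independent parts and dispatch them in order: (I) each of the eleven listed polynomials satisfies $\sigma(M)=M$, i.e. is a sporadic perfect polynomial; and (II) the nine of them with $\omega\le 4$ are the \emph{only} sporadic perfects with at most four distinct prime factors. The qualifier ``besides $M_{20a}$ and $M_{20b}$'' is then explained by the elementary count that $M_{20a},M_{20b}$ have five distinct prime factors while the other nine have at most four.

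For part (I) I would lean entirely on the multiplicativity $\sigma(XY)=\sigma(X)\sigma(Y)$ on coprime factors recorded in the Introduction, which reduces each verification $\sigma(M)=M$ to computing $\sigma$ on the prime-power factors of $M$ and checking that the product collapses back to $M$. The needed building blocks are the short identities
\[
\sigma(x^2)=\sigma((x+1)^2)=Q_2,\quad \sigma(x^3)=(x+1)^3,\quad \sigma(x^4)=Q_{4b},\quad \sigma(x^6)=Q_{3a}Q_{3b},
\]
together with $\sigma(Q_2)=x(x+1)$, $\sigma(Q_2^2)=Q_{4c}$, $\sigma(Q_{3a})=x(x+1)^2$, $\sigma(Q_{3b})=x^2(x+1)$, $\sigma(Q_{4a})=x^3(x+1)$, $\sigma(Q_{4b})=x(x+1)^3$, $\sigma(Q_{4c})=x(x+1)Q_2$. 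The remaining inputs follow from the involution $x\mapsto x+1$, under which $Q_{3a}\leftrightarrow Q_{3b}$ and $Q_{4a}\leftrightarrow Q_{4b}$; in particular $\sigma((x+1)^4)=Q_{4a}$ and $\sigma((x+1)^6)=Q_{3a}Q_{3b}$. Substituting these into each $M$ and collecting the exponents of the linear factors $x$ and $x+1$ is then a purely mechanical finite check: for example $\sigma(M_{5a})=(x+1)\cdot Q_2\cdot x(x+1)=M_{5a}$, and $\sigma(M_{16})=Q_{4b}\cdot Q_{4a}\cdot x^3(x+1)\cdot x(x+1)^3=x^4(x+1)^4Q_{4a}Q_{4b}=M_{16}$, with the other nine verifications identical in spirit.

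For part (II) I would not attempt to reprove uniqueness from scratch. The statement that these are \emph{all} sporadic perfects with $\omega\le 4$ is precisely the content of the exhaustive classification of Gallardo and Rahavandrainy \cite{Gall-Rahav2009, Gall-Rahav2009A}, so I would cite that work directly and merely record the $\omega$-count: $M_{5a},M_{5b},M_{11c},M_{11d}$ have $\omega=3$, while $M_{11a},M_{11b},M_{15a},M_{15b},M_{16}$ have $\omega=4$, and $M_{20a},M_{20b}$ have $\omega=5$. This exhibits the nine with $\omega\le 4$ and isolates the two exceptions.

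The main obstacle lives entirely in part (II): part (I) is a routine finite computation, whereas the uniqueness is a long and delicate case analysis, which is exactly why it is imported as an external theorem rather than re-derived here. What makes such a finite classification feasible at all is Canaday's structural machinery (Lemma \ref{canad}), which pins down the admissible exponents and the small degrees of the odd prime factors; absent those constraints the search space for $\omega\le 4$ would not be finitely exhaustible by elementary means.
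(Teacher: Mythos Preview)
Your proposal is correct and matches the paper's approach. The paper does not prove Lemma \ref{lesT} at all beyond the prefatory sentences: it attributes the list itself to Canaday \cite{Canaday} and the uniqueness for $\omega\le 4$ to \cite{Gall-Rahav2009, Gall-Rahav2009A}, exactly as you do for part (II); your part (I) simply makes explicit the routine verifications that the paper leaves implicit in the citation to Canaday.
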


With the same notations of Lemma \ref{lesT}, the list of the five sporadic perfects of a special form follows.

\begin{lemma}
\label{les6}
Besides $M_{20a}$ and $M_{20b}$ the following polynomials $A$ are the only  sporadic perfects with $\omega(A) \leq 4$, of the form
\begin{equation}
\label{A2S}
A := B^2 \cdot S,
\end{equation}
where $B$ is the even polynomial of higher degree, such that $B^2 \vert A$, and $S$
is a square-free polynomial coprime with $B$, i.e., one has $\gcd(B,S)=1$.
\begin{equation*}
M_{5a} = (x+1)^2 \cdot x \cdot Q_{2},
M_{5b} = x^2 \cdot (x+1) \cdot Q_{2},
M_{11a} = {((x+1)Q_2)}^2 \cdot x \cdot Q_{4c},
\end{equation*} 
\begin{equation*}
M_{11b} = {(x Q_2)}^2 \cdot (x+1) \cdot Q_{4c},
M_{16} = (x^2(x+1)^2)^2 \cdot Q_{4a} \cdot Q_{4b},
\end{equation*} 
\end{lemma}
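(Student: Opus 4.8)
The statement of Lemma~\ref{les6} is essentially a bookkeeping claim: among the eleven sporadic perfect polynomials listed in Lemma~\ref{lesT}, exactly those with $\omega(A) \le 4$ that can be written as $B^2 \cdot S$ with $B$ even, $S$ square-free and $\gcd(B,S)=1$, are (apart from $M_{20a}, M_{20b}$) the five polynomials $M_{5a}, M_{5b}, M_{11a}, M_{11b}, M_{16}$. So the proof is a finite verification, and the plan is to organize it cleanly rather than to invoke deep machinery.

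\begin{proof}[Proof plan]
The plan is to go through the list in Lemma~\ref{lesT} and, for each sporadic perfect $A$ with $\omega(A)\le 4$, extract its canonical decomposition $A = B^2\cdot S$ by collecting into $S$ exactly those primes occurring to the first power and into $B^2$ the (even part of the) remaining prime-power factors, then check two things: that $B$ (the "even polynomial of higher degree with $B^2\mid A$") is even, and that what is left over, $S$, is square-free. First I would record, for each prime $P\mid A$, its exact exponent $v_P(A)$; the decomposition $A=B^2 S$ is forced once we demand $S$ square-free, $\gcd(B,S)=1$, and $B$ of maximal degree, so $B^2 = \prod_{P:\, v_P(A)\ge 2} P^{2\lfloor v_P(A)/2\rfloor}$ and $S = \prod_{P:\, v_P(A) \text{ odd}} P$. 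Then I would simply tabulate: $M_{11c}=x^3(x+1)^4 Q_{4a}$ has $x$ to odd power $3$ and $Q_{4a}$ to power $1$, so $S = x\,Q_{4a}$, which is \emph{odd} times even — actually the obstruction here is that $B = x(x+1)^2$ is even but... one must instead note $v_x(A)=3$ odd forces $x\mid S$, hence $S$ is not a product of odd primes, but the real criterion failing is whether such an $A$ lands among the five; a short case check shows $M_{11c},M_{11d},M_{15a},M_{15b}$ do occur in the form \eqref{A2S} too, so the honest statement to prove is that among \emph{those} with $\omega(A)\le 4$, precisely five besides $M_{20a},M_{20b}$ have the decomposition with the stated maximality and coprimality — and the verification is: for $M_{5a}$, $B=x+1$, $S=x Q_2$; for $M_{5b}$, $B=x$, $S=(x+1)Q_2$; for $M_{11a}$, $B=(x+1)Q_2$, $S=x Q_{4c}$; for $M_{11b}$, $B=x Q_2$, $S=(x+1)Q_{4c}$; for $M_{16}$, $B=x^2(x+1)^2$, $S=Q_{4a}Q_{4b}$.

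The second step is to rule out the remaining sporadic perfects of small $\omega$. For $M_{11c}=x^3(x+1)^4 Q_{4a}$ one has $v_x=3$, $v_{x+1}=4$, $v_{Q_{4a}}=1$; the maximal even square dividing it with square-free complement coprime to it would force $B^2=(x+1)^4$, hence $B=(x+1)^2$, which is \emph{not even}, so this $A$ is excluded by the requirement that $B$ be even — and symmetrically $M_{11d}$. For $M_{15a}=x^3(x+1)^6 Q_{3a}Q_{3b}$, the even square part is $(x+1)^6$ giving $B=(x+1)^3$, again not even, and symmetrically $M_{15b}$; thus none of $M_{11c},M_{11d},M_{15a},M_{15b}$ satisfies the conditions. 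Finally $M_{20a},M_{20b}$ have $\omega=5>4$, and in any case are explicitly set aside by the phrase "besides $M_{20a}$ and $M_{20b}$." Collecting the survivors yields exactly $M_{5a},M_{5b},M_{11a},M_{11b},M_{16}$, proving the lemma.

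\end{proof}

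The only subtle point — and the one I would be most careful about — is the precise meaning of "$B$ is the even polynomial of higher degree such that $B^2\mid A$": this selects, among all factorizations $A=B^2 S$ with $S$ square-free and $\gcd(B,S)=1$, the unique one in which $B$ is even and of largest possible degree. One must check this determines $B$ uniquely (it does, since $B^2$ is then forced to be the product of $P^{2\lfloor v_P(A)/2\rfloor}$ over all primes $P\mid A$ with $v_P(A)\ge 2$, \emph{provided} the resulting $B$ is even), and that the evenness constraint is exactly what kills $M_{11c},M_{11d},M_{15a},M_{15b}$ while leaving the five claimed polynomials. No hard analysis is involved; the whole proof is a disciplined inspection of the eleven-entry list of Lemma~\ref{lesT}.
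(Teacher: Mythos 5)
The paper gives no written proof of Lemma~\ref{les6}: it is presented as a direct inspection of the eleven-entry list in Lemma~\ref{lesT}, which is exactly the route you take, and your five positive verifications ($M_{5a},M_{5b},M_{11a},M_{11b},M_{16}$ with the stated $B$ and $S$) are correct.

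However, your exclusion of $M_{11c},M_{11d},M_{15a},M_{15b}$ rests on a false claim: you assert that $(x+1)^2$ and $(x+1)^3$ are ``not even.'' Under the paper's definition a polynomial is \emph{odd} iff it takes the value $1$ at both $0$ and $1$, and \emph{even} otherwise; since $(x+1)^2$ and $(x+1)^3$ vanish at $x=1$, they are even, so evenness of $B$ is never the obstruction here. The correct reason these four polynomials fail to have the form \eqref{A2S} is valuation-theoretic: if $A=B^2S$ with $S$ square-free and $\gcd(B,S)=1$, then for every prime $P$ one has $v_P(A)=2v_P(B)+v_P(S)$ with $v_P(S)\in\{0,1\}$ and $v_P(S)=1\Rightarrow v_P(B)=0$, hence $v_P(A)$ is either even or equal to $1$. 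Each of $M_{11c}=x^3(x+1)^4Q_{4a}$, $M_{11d}=x^4(x+1)^3Q_{4b}$, $M_{15a}=x^3(x+1)^6Q_{3a}Q_{3b}$, $M_{15b}=x^6(x+1)^3Q_{3a}Q_{3b}$ has a linear prime of valuation exactly $3$, so no such decomposition exists. Two further slips: your aside that $x\mid S$ would make $S$ ``not a product of odd primes'' is not a requirement of the lemma (indeed $S=xQ_2$ for $M_{5a}$), and your sentence claiming that $M_{11c},M_{11d},M_{15a},M_{15b}$ ``do occur in the form \eqref{A2S} too'' contradicts your own (correct) final conclusion. The end result of your tabulation is right, but the stated reasons for the exclusions must be replaced by the valuation argument above.
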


We easily check the following lemma. It is useful for the proof  of  the last part of the theorem. 

\begin{lemma}
\label{identity2th}
Let  $a = 2^n k$ be an even number, where $k$ is odd. For any binary polynomial $A$, and positive integer $r$, set $S(A^{r}) := 1+A+\cdots+A^{r}$. Then
\begin{equation*}
S(A^{a})+1 = A \cdot (A+1)^{2^n-1} \cdot S(A^{k-1})^{2^n}.
\end{equation*}
\end{lemma}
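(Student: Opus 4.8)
The statement to prove is Lemma~\ref{identity2th}: for $a = 2^n k$ with $k$ odd, and $S(A^r) := 1 + A + \cdots + A^r$, one has
\[
S(A^{a}) + 1 = A \cdot (A+1)^{2^n-1} \cdot S(A^{k-1})^{2^n}.
\]

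My plan is to work purely formally in $\F_2[x]$ (or really in $\F_2[A]$, treating $A$ as an indeterminate), exploiting the Frobenius-type identity $(U+V)^{2^n} = U^{2^n} + V^{2^n}$ that holds in characteristic $2$. First I would record the basic telescoping identity $(A+1)\cdot S(A^{r}) = A^{r+1} + 1$, valid for any $r \geq 0$, which is immediate since $S(A^r)$ is a geometric sum. Applying this with $r = a$ gives $(A+1) S(A^a) = A^{a+1}+1$, hence $S(A^a) = (A^{a+1}+1)/(A+1)$ as polynomials (the division is exact). Then $S(A^a) + 1 = \frac{A^{a+1}+1 + (A+1)}{A+1} = \frac{A^{a+1} + A}{A+1} = \frac{A(A^{a}+1)}{A+1}$. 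So the whole problem reduces to showing $A^a + 1 = (A+1)^{2^n} S(A^{k-1})^{2^n}$, equivalently $A^a + 1 = \big((A+1) S(A^{k-1})\big)^{2^n}$.

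Now I would finish using Frobenius and the telescoping identity once more: since $a = 2^n k$, we have $A^a + 1 = (A^k)^{2^n} + 1^{2^n} = (A^k + 1)^{2^n}$ in $\F_2[A]$. And by the telescoping identity with $r = k-1$, $A^k + 1 = (A+1) S(A^{k-1})$. Substituting, $A^a + 1 = \big((A+1)S(A^{k-1})\big)^{2^n} = (A+1)^{2^n} S(A^{k-1})^{2^n}$. Plugging this back:
\[
S(A^a) + 1 = \frac{A\,(A+1)^{2^n} S(A^{k-1})^{2^n}}{A+1} = A\,(A+1)^{2^n-1} S(A^{k-1})^{2^n},
\]
which is exactly the claimed identity. (The only mild point to note is that $2^n \geq 1$ so $(A+1)^{2^n-1}$ makes sense with nonnegative exponent; and the cancellation of one factor $A+1$ is legitimate since $\F_2[A]$ is a domain.)

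There is essentially no obstacle here — the lemma is a routine formal manipulation, and the authors themselves say it is ``easily checked.'' The only thing to be careful about is to keep all the arithmetic inside $\F_2[x]$: in particular that $1 + 1 = 0$, so that adding $1$ to $S(A^a)$ and the cancellations above behave as stated, and that the Frobenius identity $(U+V)^{2^n} = U^{2^n}+V^{2^n}$ is being used with $n$ possibly zero (in which case $k = a$ is already odd and the identity degenerates to the telescoping identity, consistent with the general formula). One could alternatively prove it by induction on $n$, peeling off one square at a time via $S(A^{2m}) + 1 = A\,(A+1)\,S(A^{m-1})^{2} \cdot (\text{something})$, but the direct route above is cleaner and avoids bookkeeping, so that is what I would present.
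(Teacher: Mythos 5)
Your proof is correct. The paper does not actually supply an argument for this lemma (it is introduced with ``We easily check the following lemma''), so there is nothing to diverge from; your verification via the telescoping identity $(A+1)S(A^r)=A^{r+1}+1$ together with the Frobenius identity $A^{2^nk}+1=(A^k+1)^{2^n}$ is the natural one and fills in exactly the computation the author leaves to the reader. The only pedantic point worth noting is that when $k=1$ the factor $S(A^{k-1})=S(A^0)$ falls outside the stated convention (``positive integer $r$'') and must be read as $1$; this is an issue with the lemma's statement rather than with your argument, which handles it consistently.
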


\section{Proof of Theorem \ref{threeC}}

Remember that $r$ is the number of odd prime divisors of the even perfect polynomial $A$.
We consider the cases $r=1$, $r=2$, and $r=3$. In each of them we will work on the equality
$$
A =\sigma(A),
$$
with both $A$ and $\sigma(A)$ explicitly factored as product of primes in $\F_2[x]$.
We apply our lemmas in the section \ref{toool} to prove the result in each of these cases.
Essentially, our method consists of using the uniqueness of the factorization into primes in the ring $\F_2[x]$.\\

We assume that $r=1$. Thus, for some prime $P_1$ one has
\begin{equation}
\label{star1}
\sigma(B^2)(P_1+1) = B^2 P_1.
\end{equation}
Since $\gcd(B^2,\sigma(B^2))=1$ and $P_1$ is prime, \eqref{star1} implies that $\sigma(B^2)=P_1$. Thus, $P_1 = (1+B)^2$. This is impossible. Therefore, this case does not happen.\\

We assume that $r=2$. For some primes $P_1,P_2$ we have
\begin{equation}
\label{star2}
\sigma(B^2)(P_1+1)(P_2+1) = B^2 P_1 P_2.
\end{equation}

Equation \eqref{star2} can also be written as
\begin{equation}
\label{star3}
P_1 P_2(B^2+\sigma(B^2) = (P_1+P_2+1) \sigma(B^2).
\end{equation}

Since $\gcd(\sigma(B^2),B^2)=1$, \eqref{star2} implies that  $\sigma(B^2) \mid P_1P_2$.

Case 1.
We can assume that $\sigma(B^2) = P_1$. Thus,  $\omega(B^2) = 1$. Therefore,
$ A = B^2 P_1P_2$ is an even perfect polynomial with $\omega(A)=3$. This implies that $A \in \{M_{5a},M_{5b}\}$, by Lemma \ref{les6} and Lemma \ref{lesT}.

Case 2. We have then
\begin{equation}
\label{star4}
\sigma(B^2) = P_1P_2.
\end{equation}
Since $B^2$ is an even square, \eqref{star4} together with Lemma \ref{canad} (a), imply that both $P_1$ and $P_2$ are odd. As before, \eqref{star4} implies that $\omega(B^2) \leq 2$, so that
$A$ is an even perfect polynomial with $\omega(A) \leq 4$. By Lemma \ref{les6} and Lemma \ref{canad}, the only possibility is $A = M_{16}$, for which $B = x^2(x+1)^2, P_1 = x^4+x^3+x^2+x+1, P_2 = x^4+x^3+1$.\\

We assume now that $r=3$. We have then
\begin{equation}
\label{star5}
\sigma(B^2)(P_1+1)(P_2+1)(P_3+1) = B^2 P_1P_2 P_3.
\end{equation}

Case 1.
We have $\omega(\sigma(B^2)) =1$, say $\sigma(B^2) = P_1$. Thus, as before,  $\omega(B^2) = 1$. This implies that $\omega(A)=4$. By Lemma \ref{les6}, this case does not happen.

Case 2.
We have $\omega(\sigma(B^2)) =2$. If $\omega(B) =1$, as before, there is no solution by Lemma \ref{les6}. We assume then that $\omega(B)=2$. One sees that $\omega(A)=5$ now, thus we cannot deduce the result from Lemma \ref{les6} again.  In fact, we do not know if  $M_{20a}$ and $M_{20b}$ are the unique even perfects $M$ with $\omega(M) =5$. 

We have, by Lemma \ref{canad}(a), and without loss of generality,  that for odd primes $P_1,P_2$, for primes $R_1 \neq R_2$, and for positive integers $a_1,a_2$ the following holds.
\begin{equation}
\label{star6}
\sigma(B^2)= P_1 P_2\;\text{, and}\;B = R_1^{a_1}R_2^{a_2}.
\end{equation}
Moreover, \eqref{star5} becomes
\begin{equation}
\label{star7}
(P_1+1)(P_2+1)(P_3+1) = B^2 P_3.
\end{equation}
Assume that $P_3$ is even. If $P_3=x$, 
since $\gcd(P_3,B)=1$, and $B$ is even, we have that $R_1 =x+1$, and $R_2$ is odd. Moreover,
$P_1$ and $P_2$ are odd, hence comparing valuations in \eqref{star7} gives
$v_{x}((P_1+1)(P_2+1)(P_3+1)) \geq 2$, while $v_{x}(B P_3) =1$.  Thus, $P_3 \neq x$. By translation, $x$ to $x+1$, $P_3 \neq x+1$. Therefore, $\deg(P_3) >1$. Since $P_1,P_2,P_3$ are all odd, it follows from \eqref{star7} that, say, $R_1 =x$ and
$R_2 =x+1$, $B$ is even, $\gcd(B,P_3)=1$, and $\omega(B)=2$.
It follows from \eqref{star6} that we can take $\sigma(x^{2a_1}) = P_1$ and $\sigma((x+1)^{2a_2}) = P_2$, so that
\begin{equation}
\label{star8}
P_1+1 = x(1+x+ \cdots +x^{2a_{1}-1}),
\end{equation}
and
\begin{equation}
\label{star9}
P_2+1 = x(1+x+ \cdots +x^{2a_{2}-1}).
\end{equation}
From \eqref{star8} and \eqref{star9} we get $v_x(P_1+1)=v_x(P_2+1)=1$.
Since $x,x+1$ and $P_3$ are the only primes that divide $B^2P_3$, we can assume that, say,
$P_3 \mid P_1+1$ and $P_3 \nmid P_2+1$.  Write, $P_1+1 = x^{c_{1}}(x+1)^{c_{2}}P_3$,
$P_2+1 = x^{d_{1}}(x+1)^{d_{2}}$, and $P_3+1 = x^{e_{1}}(x+1)^{e_{2}}$.
From \eqref{star8} and \eqref{star9} we get $c_{1} =1$ and $d_{1} =1$. 

Since $P_{1} = 1+ x(x+1)^{c_{2}}P_3$ we have from \eqref{star8}
\begin{equation}
\label{star14}
(P_{1}+1)/x = \sigma(x^{2a_{1}-1}) =1+x+ \cdots +x^{2a_{2}-1} = (x+1)^{c_{2}}P_3.
\end{equation}
Thus, $(x+1)^{c_{2}}P_3$ is complete. It follows from Lemma \ref{canad}(b) that $c_{2} = 2^{n}-1$ for some positive integer $n$, since $P_1$ is odd. In other words, for $K$ complete,
we have the following equality:
\begin{equation}
\label{star15}
(P_{1}+1)/x = (x+1)^{2^{n}-1} K^{2^{n}}.
\end{equation}
It follows from \eqref{star15} and \eqref{star14} that $P_3 = K^{2^{n}}$. Hence, $n=0$. This is impossible.
Therefore, Case $2$ does not happen.

Case 3.
We have $\omega(\sigma(B^2)) =3$. Thus, we consider again \eqref{star5}, i.e.,
\begin{equation}
\label{star16}
\sigma(B^2)(P_1+1)(P_2+1)(P_3+1) = B^2 P_1P_2P_3.
\end{equation}
Equation \eqref{star16} implies immediately
\begin{equation}
\label{star17}
\sigma(B^2) = P_1P_2P_3,
\end{equation}
and
\begin{equation}
\label{star18}
(P_1+1)(P_2+1)(P_3+1) = B^2.
\end{equation}
Since $P_1,P_2$ and $P_3$ are all odd, \eqref{star18} implies that $x(x+1) \mid B$.
In particular, $\omega(B) \geq 2$.
Since $B^2$ is an even square, $\sigma(B^2)$ is odd, so that \eqref{star17} implies that $P_1,P_2$ and $P_3$ are all odd.
Thus, \eqref{star17} implies that $\omega(B) = \omega(B^2) <4$.

If $\omega(B)=2$, one has $B = x^{a}(x+1)^{b}$ for positive integers $a,b$.
Since $P_1P_2P_3$ is square free, we can assume from \eqref{star17} that, say
\begin{equation}
\label{star19}
\sigma(x^{2a}) = P_3\;\text{, and}\;\sigma((x+1)^{2b}) = P_1P_2.
\end{equation}
Moreover, since $P_3$ is odd, for some positive integers $c,d$ we have
\begin{equation} 
\label{star20}
1+ P_3 = x^{c}(x+1)^{d}.
\end{equation}
From \eqref{star19} and \eqref{star20} we obtain $c=1$, since $1+P_3=x(1+\cdots x^{2a-1})$.
Putting $K_3 = (1+P_3)/x$, one sees that
\begin{equation} 
\label{star21}
K_3 = 1+\cdots +x^{2a-1} = x^{c-1}(x+1)^d =(x+1)^d.
\end{equation}
Equation \eqref{star21} says that $K_3$ is complete, thus, as before, Lemma \ref{canad}(b) implies that for some positive integer $n$ one has $d =2^n-1$, and
$K_3 = (x+1)^{2^n-1}C^{2^n}$, with $C$ complete. This forces $C=1$. Hence,
\begin{equation} 
\label{star22}
P_3 = 1+x(x+1)^{2^n-1}.
\end{equation}
Since $P_3$ is prime, Lemma \ref{canad}(c) implies that
\begin{equation} 
\label{star23}
P_3 \in \{x^2+x+1,x^4+x^3+x^2+x+1\}.
\end{equation}
Assume that $P_3 =x^2+x+1$. Thus, from $\sigma(x^{2a}) = P_3$ we get $a=1$.
In other words, $B = x(x+1)^b$. From \eqref{star18} and \eqref{star19} we obtain
\begin{equation} 
\label{star24}
(P_1+1)(P_2+1) = x(x+1)^{2b-1}.
\end{equation}
Equation \eqref{star24} is impossible since $v_x((P_1+1)(P_2+1)) \geq 2$, while $v_x(x(x+1)^{2b-1}) =1$.  Thus $P_3 \neq x^2+x+1$.
Assume then that we have $P_3 = x^4+x^3+x^2+x+1$. 
We claim that $A = M_{20b}$
($M_{20a}$ is obtained by the same method, switching $x$ and $x+1$).
In order to prove the claim, observe that $P_3+1=x(x+1)^3$, thus \eqref{star18} becomes
\begin{equation} 
\label{star25}
(P_1+1)(P_2+1) = x^{2a-1}(x+1)^{2b-3}.
\end{equation}
From $\sigma(x^{2a})=P_3$ we get $a=2$. This together \eqref{star19} gives 
$B = x^2(x+1)^b$ and
\begin{equation} 
\label{star26}
(P_1+1)(P_2+1) = x^{3}(x+1)^{2b-3}.
\end{equation}
We can take in \eqref{star26}, with positive integers $b_{1},b_{2}$; $b_2$ odd since $P_2$
is not a square, and $b_1$ even since $b_1+b_2=2b-3$. Thus,
\begin{equation} 
\label{star27}
P_1+1 = x(x+1)^{b_{1}}, P_2+1 = x^2(x+1)^{b_{2}}.
\end{equation}

Since $\sigma((x+1)^{2b})$ is complete in $x+1$, and since $P_1,P_2$ are Mersenne
 one sees  that \eqref{star19} together with Lemma \ref{canad}(d) implies that
 \begin{equation} 
\label{star28}
\sigma((x+1)^{2b}) \in \{ x^2+x+1,x^4+x^3+1,(x^3+x+1)(x^3+x^2+1)\}.
\end{equation}

But $\omega(\sigma((x+1)^{2b}) =2$, since $\sigma((x+1)^{2b})  = P_1P_2$.
Thus, the only possibility allowed by \eqref{star28} is that $\sigma((x+1)^{2b}) = (x^3+x+1)(x^3+x^2+1)$. Therefore, $P_1 = x^3+x+1, P_2 = x^3+x^2+1$, i.e.,  $b=3$. Thus, $B=x^2(x+1)^3$. In other words, we have
\begin{equation} 
\label{star29}
B^2 P_1P_2P_3 = M_{20b}.
\end{equation}

This finishes the case in which $\omega(B)=2$.

We claim that the remaining case, namely $\omega(B)=3$ does not happen. To prove the claim, we assume that, on the contrary, $B = R_1^{a_1}R_2^{a_2}R_3^{a_3}$ with some positive integers $a_1,a_2,a_3$. Observe that the perfect polynomial $A =B^2 P_1P_2P_3$
 has $\omega(A)=6$ so that, as before, we cannot rely on Lemma \ref{les6} for the proof. But, we can, and do, assume that $R_1=x$, $R_2=x+1$, and that $R_3$ is odd, 
 since $x(x+1) \mid B$ (see \eqref{star18}). Thus, \eqref{star17} becomes
\begin{equation}
\label{star30}
\sigma(x^{2a_1})\sigma(x^{2a_2})\sigma(R_3^{2a_3}) = P_1P_2P_3.
\end{equation}

Since $P_1P_2P_3$ is square-free, the three factors on the left-hand side of \eqref{star30}
are pairwise coprime, so that we can take
\begin{equation} 
\label{star31}
\sigma(x^{2a_1}) = P_1,\sigma(x^{2a_2}) = P_2,\sigma(R_3^{2a_3})=P_3.
\end{equation}
Put, $2a_1 = 2^{n_1}k_1, 2a_2 = 2^{n_2}k_2, 2a_3 = 2^{n_3}k_3$, for odd numbers $k_1,k_2,k_3$.  From Lemma \ref{identity2th} we get
\begin{equation} 
\label{star32}
P_1+1 = \sigma(x^{2a_1}) +1 = x(x+1)^{2^{n_1}-1}(1+x+ \cdots + x^{k_1-1})^{2^{n_1}}.
\end{equation}
\begin{equation} 
\label{star33}
P_2+1 = \sigma(x^{2a_2}) +1 = x(x+1)^{2^{n_2}-1}(1+x+ \cdots + x^{k_2-1})^{2^{n_2}}.
\end{equation}
\begin{equation} 
\label{star34}
P_3+1 = \sigma(R_3^{2a_3}) +1 = R_3(R_3+1)^{2^{n_3}-1}(1+R_3+ \cdots + R_3^{k_3-1})^{2^{n_3}}.
\end{equation}
On the other hand, \eqref{star18} implies
\begin{equation} 
\label{star35}
P_1+1 = x^{u_1}(x+1)^{u_2}R_3^{u_3},
\end{equation}
\begin{equation} 
\label{star36}
P_2+1 = x^{v_1}(x+1)^{v_2}R_3^{v_3},
\end{equation}
and
\begin{equation} 
\label{star37}
P_3+1 = x^{w_1}(x+1)^{w_2}R_3^{w_3}.
\end{equation}

Assume, first, that $k_1=k_2=1$. Thus, from \eqref{star35} and \eqref{star36} we get
\begin{equation} 
\label{star38}
P_1+1 = x(x+1)^{2n_1-1},
\end{equation}
and
\begin{equation} 
\label{star39}
P_2+1 = (x+1) x^{2n_2-1}.
\end{equation}
But from \eqref{star34} and \eqref{star37} we have $w_3=1$.  Thus, \eqref{star38} and \eqref{star39}
implies that
\begin{equation} 
\label{star40}
v_{R_3}(P_1+1)(P_2+1)(P_3+1) =1.
\end{equation}
Clearly, \eqref{star40} contradicts \eqref{star18}. Thus, the case $k_1=k_2=1$ does not happen.\\

We claim that the case $k_1>1$ and $k_2 >1$ also does not happen.

Since from \eqref{star18} $(P_1+1)(P_2+1)(P_3+1)=B^2$ and since $\omega(B)=3$
for all $j$, one has $2 \leq \omega(P_j+1) \leq 3$. Moreover, one sees that $\omega(P_1+1)=2$
is equivalent to $k_1=1$, and $\omega(P_2+1)=2$ is equivalent to $k_2=1$. Thus, $k_1>1$ and
$k_2>1$ forces $R_3 = 1+x+ \cdots + x^{k_1-1}$, and $R_3 =1+(x+1)+ \cdots +(x+1)^{k_2-1}$. In other words, we have $\sigma(x^{k_1-1})=\sigma((x+1)^{k_2-1})$. This is impossible by Lemma \ref{canad}(e).

By the same argument, one sees that it remains only two possibilities, either Case A, or Case B:

Case A. One has $k_1=1$, $k_2>1$, and $R_3 = 1+(x+1)+ \cdots +(x+1)^{k_2-1}$.
Case B. One has $k_1>1$, $k_2=1$, and $R_3 = 1+x+\cdots+x^{k_1-1}$.

We work now Case A: We have $2a_1 =2^{n_1}$, with $n_1 \geq 1$.
We have $P_1=1+x(x+1)^{2^{n_1-1}}$. It follows from Lemma \ref{canad}(f) that $n_1 \in \{1,2\}$, i.e., that $a_1 \in \{1,2\}$. Thus, $$P_1 \in \{x^2+x+1,x^4+x^3+x^2+x+1\}.$$

Case A1. Assume that $P_1 = x^2+x+1$. Thus, $n_1=1=a_1$, so that $B=x(x+1)^{a_2}R_3^{a_3}$. Thus, \eqref{star18} becomes
\begin{equation} 
\label{star41}
(P_2+1)(P_3+1) = x(x+1)^{2a_2-1}R_3^{2a_3}.
\end{equation}

I will now recall that \eqref{star31} implies $\sigma(x^2)=P_1,\sigma((x+1)^{2a_2})=P_2$,
and $\sigma(R_3^{2a_3})=P_3$, with $2a_2 =2^{n_2}k_2, 2a_3 =2^{n_3}k_3$, $k_2>1$ is odd, and $k_3 \geq 1$ is odd.
 
But $P_2$ and $P_3$ are both odd, thus $v_{x}((P_2+1)(P_3+1)) \geq 2$, while \eqref{star41} implies that $v_{x}(x(x+1)^{2a_2-1}R_3^{2a_3}) =1$. This is impossible. Thus, Case A1 does not happen.\\

Case A2. Assume that $P_1 = x^4+x^3+x^2+x+1$. Thus, $a_1=2$, so that $B=x^2(x+1)^{a_2}R_3^{a_3}$. Thus, after division of both sides by $x(x+1)^3$, equation \eqref{star18} becomes 
\begin{equation} 
\label{star42}
x^3(x+1)^{2a_2-3}R_3^{2a_3}= (P_2+1)(P_3+1),
\end{equation}
with $a_2 \geq 2$.
Here, we have from \eqref{star31}, $\sigma(x^4)=P_1,\sigma((x+1)^{2a_2})=P_2$, and $\sigma(R_3^{2a_3})=P_3$. By \eqref{star33} and \eqref{star34} we have $v_{R_3}(P_2+1)=2^{n_2}$ and $v_{R_3}(P_3+1)=1$.
 
Thus, $v_{R_3}((P_2+1)(P_3+1)=v_{R_3}(P_2+1)+v_{R_3}(P_3+1) = 2^{n_2}+1$. On the other hand, from  \eqref{star32} we obtain $v_{R_3}(x^3(x+1)^{2a_2-3}R_3^{2a_3})=2a_3$. Thus, $2a_3 = 2^{n_2}+1$. This is impossible, thus Case A2 does not happen.

Thus, Case A does not happen.\\

Case B. We have now, $k_2=1$ and $k_1>1$.  Thus, $2a_2 =2^{n_2}$, $2a_1= 2^{n_1}k_1$
and 
$$P_1 = 1+x(x+1)^{2^{n_1}-1}(1+\cdots+x^{k_1-1})^{2^{n_1}}.$$ Since $k_2=1$ one has
\begin{equation} 
\label{star43}
P_2 = 1+x^{2^{n_2}-1}(x+1).
\end{equation}
Since $P_2$ is prime, Lemma \ref{canad}(f), \eqref{star43}, and switching
$x$ and $x+1$ gives $n_2 \in \{1,2\}$. If $n_2=1$ then $a_2=1$ so that $P_2=x^2+x+1$, while if $n_2=2$ then $a_2=2$ and $P_2 =x^4+x^3+1$.\\

Case B1. We have $P_2+1=x(x+1)$. In particular, $k_2=1$ and $n_2=1$. More precisely, we have $2a_1=2^{n_1}k_1,2a_2=2^{n_2}k_2 =2,2a_3=2^{n_3}k_3$.

We have also $$P_1+1 = x(x+1)^{2^{n_1}-1}(1+\cdots+x^{k_1-1})^{2^{n_1}},$$ and $$P_3+1 = R_3(R_3+1)^{2^{n_3}-1}(1+\cdots+R_3^{k_3-1})^{2^{n_3}}.$$

We have thus, by definition of $B$
\begin{equation} 
\label{star44}
B^2 = x^{2^{n_1}k_1}(x+1)^2R_3^{2^{n_3}k_3}.
\end{equation}
Divide now both sides of \eqref{star18} by $x(x+1)=P_2+1$ to get
\begin{equation} 
\label{star45}
(P_1+1)(P_3+1) = x^{2^{n_1}k_1}(x+1)R_3^{2^{n_3}k_3}.
\end{equation}
Since $P_3$ and $P_1$ are odd primes \eqref{star45} implies
\begin{equation} 
\label{star46}
2 \leq v_{x+1}((P_1+1)(P_3+1)) = v_{x+1}( x^{2^{n_1}k_1}(x+1)R_3^{2^{n_3}k_3})=1.
\end{equation}
Since \eqref{star46} is impossible, we obtain that Case B1 does not happen.\\

Case B2. Here, $P_2+1=x^3(x+1)$. In particular, $k_2=1$ and $n_2=2$. More precisely, we have $2a_2=2^{n_2}k_2 = 4$. As before, we have by definition of $B$
\begin{equation} 
\label{star47}
B^2 = x^{2 a_1}(x+1)^4 R_3^{2a_3}.
\end{equation}
Divide now both sides of \eqref{star18} by $x^3(x+1)=P_2+1$ to get
\begin{equation} 
\label{star48}
 x^{2 a_1-3}(x+1)^3 R_3^{2a_3} = (P_1+1)(P_3+1).
\end{equation}

We have now 
$$P_1+1 =x(x+1)^{2^{n_1}-1}(1+\cdots+x^{k_1-1})^{2^{n_1}},$$
$$P_3+1 = R_3(R_3+1)^{2^{n_3}-1}(1+\cdots+R_3^{k_3-1})^{2^{n_3}}.$$
Computing the valuation in $R_3$ in both sides of \eqref{star48} we obtain
\begin{equation} 
\label{star49}
 2a_3 = 2^{n_1}+1.
\end{equation}
Since \eqref{star49} is impossible, we obtain that Case B2 does not happen.
This finish the proof that the case $\omega(B)=3$ does not happen.
Thus, we proved the theorem.

\noindent

\end{document}